\documentclass[11pt, reqno]{amsart}

\makeatletter
\def\theequation{\@arabic\c@equation}
\usepackage{amsmath,amsthm,amscd,amssymb,latexsym,upref}
\usepackage{versions}
\usepackage{enumerate}
\usepackage{graphicx}
\usepackage{amsmath, amssymb, amsfonts, amsthm}
\usepackage{mathrsfs}
\usepackage{hyperref}
\usepackage{microtype}

\usepackage{newtxtext}
\usepackage{newtxmath}

\newtheorem{definition}{Definition}[section]
\newtheorem{remark}[definition]{Remark}
\newtheorem{proposition}[definition]{Proposition}
\newtheorem{theorem}[definition]{Theorem}
\newtheorem{lemma}[definition]{Lemma}
\newtheorem{corollary}[definition]{Corollary}
\newtheorem{example}[definition]{Example}

\numberwithin{equation}{section}

\begin{document}
\title{A Metric Framework for Approximate Transitivity, Mixing, and Hypercyclicity}

\author{Hadi Obaid Alshammari}
\address{Department of Mathematics, College of Science, Jouf University, Sakaka, P.O. Box 2014, Saudi Arabia}
\email{hahammari@ju.edu.sa}

\author{Otmane Benchiheb}
\address{Department of Mathematics, Faculty of Sciences, Chouaib Doukkali University, El Jadida, Morocco}
\email{otmane.benchiheb@gmail.com}

\author{Dimitrios Chiotis}
\address{School of Physical and Chemical Sciences, Queen Mary University of London, United Kingdom}
\email{d.chiotis@qmul.ac.uk}

\subjclass[2020]{37B99, 47A16, 37B20, 54H20}

\keywords{topological transitivity; topological mixing; approximate hypercyclicity; Hypercyclicity Criterion; weighted backward shifts; syndetic sequences.}

\begin{abstract}
	We study metric versions of transitivity, mixing, and hypercyclicity for
	continuous maps, based on intersections of the form
	\(
	f^{n}(U)\cap B_{\delta}(V)\neq\varnothing.
	\)
	We introduce $\delta$-topological transitivity, $\delta$-topological mixing,
	and a uniform-from-below version of $\delta$-mixing, and prove
	\(
	\mathrm{UFB\mbox{-}}\delta\text{-TM}
	\;\Rightarrow\;
	\delta\text{-TM}
	\;\Rightarrow\;
	\delta\text{-TT}.
	\)
	In the linear setting of separable F-spaces, we formulate a
	$\delta$-Hypercyclicity Criterion, prove that it implies
	$\delta$-hypercyclicity, and show that the classical Hypercyclicity Criterion
	implies the $\delta$-criterion for every $\delta>0$. We further show that
	this criterion yields eventual $\delta$-mixing along the underlying sequence.
	Finally, we discuss weighted backward shifts, derive sufficient conditions for
	$\delta$-topological mixing, and show that $\lambda B$ satisfies the
	$\delta$-Hypercyclicity Criterion for every $\delta>0$.
\end{abstract}
	
	\maketitle
	\sloppy
	\fussy

	\section{Introduction}
	
	The study of discrete-time dynamical systems, both linear and nonlinear, is a central
	topic in modern analysis. In the nonlinear setting, one considers a continuous map
	$f\colon M\to M$ on a metric space $(M,d)$, and the dynamics is generated by its
	iterates $(f^n)_{n\ge0}$. In the linear setting, $M$ is typically a Banach or a
	Fr\'echet space $X$, and $f$ is replaced by a continuous linear operator
	$T\colon X\to X$. Although the evolution is linear, the orbit
	$\{T^n x : n\ge0\}$ of a single vector may still exhibit very rich behaviour.
	This places linear dynamics at the crossroads of functional analysis, topological
	dynamics and ergodic theory; see, for instance,
	\cite{BayartMatheron2009,GEP2011}.
	
	A fundamental notion in both frameworks is \emph{hypercyclicity}. A continuous map
	$f\colon M\to M$ is called \emph{hypercyclic} if there exists $x\in M$ such that
	the forward orbit
	\[
	\operatorname{Orb}(x,f) := \{f^n(x) : n\ge0\}
	\]
	is dense in $M$. Such an $x$ is a \emph{hypercyclic point} for $f$.
	On separable metric spaces, hypercyclicity is closely related to topological
	transitivity, going back to Birkhoff \cite{Birkhoff1929}. In the linear context,
	the first explicit example was provided by Rolewicz \cite{Rolewicz1969}, who proved
	that $\lambda B$ is hypercyclic on $\ell^p$ whenever $|\lambda|>1$, where $B$
	denotes the backward shift. Weighted shifts have since become one of the basic model
	classes in linear dynamics, and the Hypercyclicity Criterion is now one of the main
	tools of the subject; see \cite{GEP2011} for a systematic account.
	
	Classical hypercyclicity, topological transitivity and topological mixing are
	inherently \emph{exact}: they require genuine intersections of iterates of open
	sets, or exact density of an orbit. In many metric situations, however, it is
	natural to allow a fixed spatial tolerance. This motivates the study of
	\emph{approximate} or \emph{metric} versions of these notions.
	
	An early step in this direction is due to Feldman \cite{Feldman2002}, who introduced
	$d$-dense orbits in linear dynamics. Motivated by this perspective, the aim of the
	present paper is to study quantitative analogues of transitivity, mixing and
	hypercyclicity on metric spaces and on separable F-spaces, and to clarify some of
	the relations between these approximate notions and their classical counterparts.
	
	\medskip
	
	\noindent\textbf{Metric framework.}
	Let $(M,d)$ be a metric space, let $f\colon M\to M$ be continuous, and fix a
	tolerance $\delta>0$. We consider approximate versions of topological
	transitivity and mixing based on intersections of the form
	$f^n(U)\cap B_\delta(V)\neq\varnothing$, where $U,V\subset M$ are nonempty open
	sets and $B_\delta(V)$ is the open $\delta$-neighbourhood of $V$. In this
	framework we study:
	\begin{itemize}
\item $\delta$-topological transitivity ($\delta$-TT),
\item $\delta$-topological mixing ($\delta$-TM),
\item uniform-from-below $\delta$-topological mixing (UFB-$\delta$-TM).
\end{itemize}
These properties form a natural hierarchy, and one of the basic results established
in the paper is the implication chain
\[
\mathrm{UFB\mbox{-}}\delta\text{-TM}
\;\Longrightarrow\;
\delta\text{-TM}
\;\Longrightarrow\;
\delta\text{-TT}.
\]
We also present examples showing that these notions are genuinely metric in nature,
that $\delta$-topological mixing is strictly weaker than classical topological
mixing, and that some converse implications fail in general.

\medskip

\noindent\textbf{Linear framework and $\delta$-hypercyclicity.}
In the linear setting, we work with continuous linear operators on separable
F-spaces endowed with a translation-invariant metric compatible with the topology.
We formulate a \emph{$\delta$-Hypercyclicity Criterion}, in the spirit of the
classical Hypercyclicity Criterion, and prove that in this setting it implies
$\delta$-hypercyclicity. We also show that the classical Hypercyclicity Criterion
implies the $\delta$-criterion for every $\delta>0$. In addition, we show that
the $\delta$-Hypercyclicity Criterion yields eventual $\delta$-mixing along the
underlying sequence.

\medskip

\noindent\textbf{Weighted shifts.}
We then apply the general framework to weighted backward shifts. We recall the
classical mixing criteria for unilateral and bilateral weighted shifts, derive from
them corresponding sufficient conditions for $\delta$-topological mixing, and
include the classical example of the weighted shift $\lambda B$, which satisfies
the $\delta$-Hypercyclicity Criterion for every $\delta>0$.

\medskip

\noindent\textbf{Organization of the paper.}
Section~2 introduces and studies approximate transitivity and mixing in metric
spaces. Section~3 is devoted to the $\delta$-Hypercyclicity Criterion and its
relation to the classical Hypercyclicity Criterion. Section~4 concerns weighted
shifts and related examples.

Throughout the paper, $(M,d)$ denotes a metric space and $f\colon M\to M$ a
continuous map. For $A\subset M$ and $r>0$ we write
\begin{equation}\label{eq:BrA}
B_r(A)
:=\{x\in M:\operatorname{dist}(x,A)<r\},
\qquad
\operatorname{dist}(x,A):=\inf_{y\in A} d(x,y).
\end{equation}
When the radius is fixed to $\delta>0$, we simply write $B_\delta(A)$.
\section{Approximate transitivity and mixing in metric spaces}

We first introduce the basic approximate notions used throughout this section.
For the reader, as a convenience, we recall the definition of $\delta$-topological transitivity from \cite{BenchihebAlshammari2024}.

\begin{definition}[$\delta$-topological transitivity ($\delta$-TT)]
\label{def:delta-TT}
Let $(M,d)$ be a metric space and let $f:M\to M$ be continuous.
Fix $\delta>0$.
We say that $f$ is \emph{$\delta$-topologically transitive} ($\delta$-TT)
if for every pair of nonempty open sets $U,V\subset M$,
there exists $n\in\mathbb{N}_0$ such that
\[
f^n(U)\cap B_\delta(V)\neq\varnothing.
\]
Equivalently, for every pair of nonempty open sets $U,V\subset M$,
there exist $x\in U$, $y\in V$, and $n\in\mathbb{N}_0$ such that
\[
d(f^n(x),y)<\delta.
\]
\end{definition}
We now introduce the notion of $\delta$-topological mixing in metric spaces.
\begin{definition}[$\delta$-topological mixing ($\delta$-TM)]
\label{def:delta-TM}
Let $(M,d)$ be a metric space and let $f:M\to M$ be a continuous map.
Fix $\delta>0$.
We say that $f$ is \emph{$\delta$-topologically mixing}
(\emph{$\delta$-TM}) if for every pair of nonempty open sets
$U,V\subset M$, there exists $N\in\mathbb{N}$ such that
\[
\inf_{x\in U,\,y\in V} d\bigl(f^n(x),y\bigr) < \delta,
\qquad \text{for all } n\ge N.
\]
Equivalently,
\[
f^n(U)\cap B_\delta(V)\neq\varnothing,
\qquad \text{for all } n\ge N.
\]
\end{definition}
We next introduce a stronger variant of $\delta$-topological mixing based on a fixed uniform radius strictly below $\delta$.
\begin{definition}[Uniform-from-below $\delta$-topological mixing (UFB-$\delta$-TM)]
\label{def:UFB-delta-TM}
Let $(M,d)$ be a metric space and let $f:M\to M$ be a continuous map.
Fix $\delta>0$.
We say that $f$ is \emph{uniform-from-below $\delta$-topologically mixing}
(\emph{UFB-$\delta$-TM}) if there exists $\eta\in(0,\delta)$ such that
for every pair of nonempty open sets $U,V\subset M$, there exists
$N\in\mathbb{N}$ such that
\[
f^n(U)\cap B_{\eta}(V)\neq\varnothing,
\qquad \text{for all } n\ge N.
\]
\end{definition}

The following notion is a metric-dynamical adaptation of the idea of a $\delta$-dense orbit.

\begin{definition}[$\delta$-transitive point]
\label{def:delta-transitive-point}
Let $(M,d)$ be a metric space and let $f:M\to M$ be a continuous map.
Fix $\delta>0$.
A point $x\in M$ is called \emph{$\delta$-transitive} for $f$ if for every
nonempty open set $V\subset M$,
\[
\inf_{n\ge0}\operatorname{dist}\bigl(f^n(x),V\bigr)<\delta.
\]
Equivalently, the forward orbit $\{f^n(x):n\ge0\}$ is $\delta$-dense in $M$.
\end{definition}


Now we compare the notions introduced above.  
We begin with two simple metric constructions that will be used repeatedly.

We begin with two elementary metric constructions that will be used repeatedly in the sequel.

\begin{lemma}[Bounded (capped) metric]
\label{lem:bounded-metric}
Let $(M,\rho)$ be a metric space and define
\(
d_b(x,y):=\min\{1,\rho(x,y)\} \),  \(x,y\in M.
\)
Then:
\begin{enumerate}
	\item $d_b$ is a metric compatible with the topology induced by $\rho$;
	\item $\operatorname{diam}(M,d_b)\le 1$;
	\item for any nonempty $V\subset M$ and $r>0$,
	\[
	r>1 \;\Rightarrow\; B^{d_b}_r(V)=M,
	\qquad
	0<r\le 1 \;\Rightarrow\; B^{d_b}_r(V)
	=\{x\in M:\operatorname{dist}_\rho(x,V)<r\},
	\]
	where $B^{d_b}_r(V)$ denotes the $r$-neighbourhood of $V$ with
	respect to $d_b$.
\end{enumerate}
\end{lemma}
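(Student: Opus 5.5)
The proof is a direct verification of the three items in turn; nothing from earlier in the paper is needed beyond the definition \eqref{eq:BrA} of neighbourhoods.

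For (1), we first check the metric axioms. Symmetry and the fact that $d_b(x,y)=0\iff x=y$ are immediate from the corresponding properties of $\rho$, because $\min\{1,t\}=0$ exactly when $t=0$. For the triangle inequality, $d_b(x,z)\le d_b(x,y)+d_b(y,z)$, we split into two cases:
\begin{itemize}
\item If either term on the right equals $1$, the right-hand side is at least $1\ge d_b(x,z)$.
\item Otherwise both terms coincide with $\rho(x,y)$ and $\rho(y,z)$. Then $d_b(x,z)\le\rho(x,z)\le\rho(x,y)+\rho(y,z)$ by the triangle inequality for $\rho$.
\end{itemize}
For compatibility of topologies, we observe that for $0<r\le1$ the open balls agree:
\[
B^{d_b}(x,r)=\{y:\min\{1,\rho(x,y)\}<r\}=\{y:\rho(x,y)<r\}=B^{\rho}(x,r).
\]
Since balls of radius at most $1$ form a neighbourhood base at each point for either metric, the two metrics generate the same topology.

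Item (2) is immediate, since $d_b\le1$ pointwise, so $\operatorname{diam}(M,d_b)\le 1$.

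For (3), we first relate the distance functions:
\[
\operatorname{dist}_{d_b}(x,V)=\inf_{y\in V}\min\{1,\rho(x,y)\}=\min\{1,\operatorname{dist}_\rho(x,V)\}.
\]
This holds because $t\mapsto\min\{1,t\}$ is nondecreasing and continuous, so it commutes with infima. Both conclusions then follow:
\begin{itemize}
\item If $r>1$, then $\operatorname{dist}_{d_b}(x,V)\le1<r$ for every $x\in M$, so $B^{d_b}_r(V)=M$. Here $V\neq\varnothing$ is needed so that the infimum is at most $1$.
\item If $0<r\le1$, then $\min\{1,s\}<r$ holds if and only if $s<r$: when $s\ge1$ the left side equals $1\ge r$, and when $s<1$ the left side equals $s$. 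Hence $B^{d_b}_r(V)=\{x:\operatorname{dist}_\rho(x,V)<r\}$.
\end{itemize}

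The only step requiring a little care is the commutation of $\min\{1,\cdot\}$ with the infimum, together with the boundary case $r=1$. Both are handled by the monotonicity argument above.
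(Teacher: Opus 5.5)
Your proof is correct and follows the paper's argument essentially step for step: the same triangle-inequality verification (your case split is just a proof of $\min\{1,a+b\}\le\min\{1,a\}+\min\{1,b\}$, which the paper invokes directly), the same coincidence of balls of radius at most $1$ for the topology, and the same infimum comparison for item (3). Your identity $\operatorname{dist}_{d_b}(x,V)=\min\{1,\operatorname{dist}_\rho(x,V)\}$ supplies the justification for the equivalence $\inf_{y\in V}d_b(x,y)<r\iff\inf_{y\in V}\rho(x,y)<r$, which the paper states without comment.
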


\begin{proof}
The map $d_b(x,y):=\min\{1,\rho(x,y)\}$ is clearly symmetric, nonnegative, and vanishes exactly when $x=y$. For the triangle inequality, if $x,y,z\in M$, then
\(
\rho(x,z)\le \rho(x,y)+\rho(y,z),
\)
hence
\(
d_b(x,z)\le \min\{1,\rho(x,y)+\rho(y,z)\}.
\)
Using $\min\{1,a+b\}\le \min\{1,a\}+\min\{1,b\}$ for all $a,b\ge0$, we obtain
\(
d_b(x,z)\le d_b(x,y)+d_b(y,z).
\)
Thus $d_b$ is a metric.

If $0<r\le1$, then $B_{d_b}(x,r)=B_\rho(x,r)$, since
$d_b(x,y)<r$ is equivalent to $\rho(x,y)<r$. It follows immediately that $d_b$ and $\rho$ induce the same topology.

Also, $d_b(x,y)\le1$ for all $x,y\in M$, so $\operatorname{diam}(M,d_b)\le1$.

Finally, let $V\subset M$ be nonempty. If $r>1$, then $d_b(x,y)\le1<r$ for all $x\in M$ and $y\in V$, so $B_r^{d_b}(V)=M$.

If $0<r\le1$, then for every $x\in M$,
\[
x\in B_r^{d_b}(V)
\iff
\inf_{y\in V} d_b(x,y)<r
\iff
\inf_{y\in V} \rho(x,y)<r.
\]
Therefore
\(
B_r^{d_b}(V)=\{x\in M:\operatorname{dist}_\rho(x,V)<r\}.
\)
This completes the proof.
\end{proof}

\begin{lemma}[Neighbourhoods above the diameter]
\label{lem:diameter-neighborhood}
Let $(M,d)$ be a metric space with finite diameter
\(
D=\operatorname{diam}(M,d)<\infty.
\)
Then for any nonempty subset $V\subset M$ and any $\delta>D$, one has
\(
B_\delta(V)=M.
\)
Moreover, for $\delta=D$,
\(
B_D(V)=\{x\in M:\operatorname{dist}(x,V)<D\}.
\)
In particular, if $\operatorname{dist}(x,V)<D$ for every $x\in M$, then
\(
B_D(V)=M.
\)
\end{lemma}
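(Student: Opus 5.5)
The plan is to unwind the definition of $B_\delta(V)$ from \eqref{eq:BrA} and compare $\operatorname{dist}(x,V)$ with the diameter. This is elementary, so the work is in stating each of the three assertions carefully.

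\textbf{First claim.} Fix a nonempty $V\subset M$ and $\delta>D$. Take $x\in M$ and pick any $y\in V$, which exists because $V\neq\varnothing$. Then
\[
\operatorname{dist}(x,V)\le d(x,y)\le \operatorname{diam}(M,d)=D<\delta ,
\]
so $x\in B_\delta(V)$. Since $x$ was arbitrary, $M\subset B_\delta(V)$, and the reverse inclusion is trivial. This is the same argument as item (3) of Lemma~\ref{lem:bounded-metric}, with the cap $1$ replaced by $D$.

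\textbf{Second claim.} For $\delta=D$, the displayed identity is just \eqref{eq:BrA} with $r=D$. I will point out that nothing more can be said in general, because the bound $\operatorname{dist}(x,V)\le D$ obtained above need not be strict. If $D=0$, the identity still holds formally, and both sides are empty since the strict inequality $\operatorname{dist}(x,V)<0$ never holds.

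\textbf{Third claim.} Assume $\operatorname{dist}(x,V)<D$ for every $x\in M$. By the second claim every $x$ lies in $B_D(V)$, so $B_D(V)=M$.

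The only place needing care is the boundary case $\delta=D$. There the supremum defining the diameter may be attained, giving $\operatorname{dist}(x,V)=D$ and hence $x\notin B_D(V)$. This is why the statement adds an explicit hypothesis for $\delta=D$ instead of claiming $B_D(V)=M$ outright. If useful, I will add a short remark with an example: in $M=\{0,1\}\subset\mathbb{R}$ with $V=\{0\}$ we have $D=1$, yet $1\notin B_1(V)$.
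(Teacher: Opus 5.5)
Your proof is correct and follows the paper's argument: for $\delta>D$ you pick $y\in V$ and bound $\operatorname{dist}(x,V)\le d(x,y)\le D<\delta$, and the case $\delta=D$ is just the definition of the open $D$-neighbourhood, from which the final assertion is immediate. Your extra remarks are accurate but not needed for the lemma: the degenerate case $D=0$ holds only formally, since $B_r$ is defined for $r>0$, and the two-point example correctly shows that $B_D(V)\neq M$ can occur.
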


\begin{proof}
Let $V\subset M$ be nonempty.

If $\delta>D$, then for every $x\in M$ and every $y\in V$,
\(
d(x,y)\le D<\delta.
\)
Hence
\(
\operatorname{dist}(x,V)\le D<\delta,
\)
so $x\in B_\delta(V)$. Therefore
\(
B_\delta(V)=M.
\)

For $\delta=D$, the identity
\(
B_D(V)=\{x\in M:\operatorname{dist}(x,V)<D\}
\)
is simply the definition of the open $D$-neighbourhood of $V$.
Thus, if $\operatorname{dist}(x,V)<D$ for every $x\in M$, then
\(
B_D(V)=M.
\)
\end{proof}

We now record the basic implications between the approximate mixing and transitivity notions introduced above.

\begin{proposition}
\label{prop:implications}
Let $(M,d)$ be a metric space, let $f:M\to M$ be continuous, and fix $\delta>0$.
Then
\[
\mathrm{UFB\mbox{-}}\delta\text{-TM}
\;\Longrightarrow\;
\delta\text{-TM}
\;\Longrightarrow\;
\delta\text{-TT}.
\]
\end{proposition}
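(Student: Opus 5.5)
The plan is to prove the two implications separately, each directly from the definitions. The only tool needed is that neighbourhoods grow with the radius: if $0<\eta<\delta$, then $B_\eta(V)\subset B_\delta(V)$. This holds because $\operatorname{dist}(x,V)<\eta$ implies $\operatorname{dist}(x,V)<\delta$.

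\textbf{First implication: UFB-$\delta$-TM $\Rightarrow$ $\delta$-TM.} Assume $f$ is UFB-$\delta$-TM, and fix the radius $\eta\in(0,\delta)$ given by Definition~\ref{def:UFB-delta-TM}. Let $U,V\subset M$ be nonempty open sets. The definition gives $N\in\mathbb{N}$ with
\[
f^n(U)\cap B_\eta(V)\neq\varnothing \quad\text{for all } n\ge N.
\]
By the inclusion $B_\eta(V)\subset B_\delta(V)$, we also get
\[
f^n(U)\cap B_\delta(V)\neq\varnothing \quad\text{for all } n\ge N.
\]
This is exactly Definition~\ref{def:delta-TM}, with the same $N$.

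\textbf{Second implication: $\delta$-TM $\Rightarrow$ $\delta$-TT.} Assume $f$ is $\delta$-TM and let $U,V$ be nonempty open sets. Definition~\ref{def:delta-TM} provides $N$ such that $f^n(U)\cap B_\delta(V)\neq\varnothing$ for every $n\ge N$. Taking $n=N\in\mathbb{N}_0$ produces the single iterate required in Definition~\ref{def:delta-TT}.

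\textbf{Consistency of formulations.} I would also check that the equivalent formulations in the definitions agree, so that either version can be used. The key fact is that $f^n(U)\cap B_\delta(V)\neq\varnothing$ holds if and only if there exist $x\in U$ and $y\in V$ with $d(f^n(x),y)<\delta$. The reason is that $\operatorname{dist}(f^n(x),V)<\delta$ means some $y\in V$ satisfies $d(f^n(x),y)<\delta$. In the same way, this intersection condition is equivalent to
\[
\inf_{x\in U,\,y\in V} d\bigl(f^n(x),y\bigr)<\delta .
\]
There is no real obstacle here. The only point needing care is to use the strict inequality consistently, since all neighbourhoods are open.
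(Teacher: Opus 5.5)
Your proof is correct and follows the paper's argument exactly. The inclusion $B_\eta(V)\subset B_\delta(V)$ for $\eta<\delta$ gives the first implication with the same $N$, and picking a single $n\ge N$ gives the second; your extra check that the equivalent formulations of the intersection condition agree is accurate but not needed.
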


\begin{proof}
Assume first that $f$ is UFB-$\delta$-TM. Then there exists $\eta\in(0,\delta)$ such that for every pair of nonempty open sets $U,V\subset M$, there exists $N\in\mathbb N$ such that $f^n(U)\cap B_\eta(V)\neq\varnothing$ for all $n\ge N$. Since $\eta<\delta$, we have $B_\eta(V)\subset B_\delta(V)$. Therefore $f^n(U)\cap B_\delta(V)\neq\varnothing$ for all $n\ge N$, and thus $f$ is $\delta$-TM.

Now assume that $f$ is $\delta$-TM. Let $U,V\subset M$ be nonempty open sets. Then there exists $N\in\mathbb N$ such that $f^n(U)\cap B_\delta(V)\neq\varnothing$ for all $n\ge N$. In particular, taking any $n\ge N$, we get $f^n(U)\cap B_\delta(V)\neq\varnothing$. Hence $f$ is $\delta$-TT.
\end{proof}

\begin{remark}[Comparison between classical and $\delta$-topological mixing]
The classical notion of topological mixing requires that, for every pair of nonempty
open sets $U,V\subset M$, there exists $N\in\mathbb{N}$ such that
\[
f^n(U)\cap V\neq\varnothing \quad \text{for all } n\ge N.
\]
Thus, the iterates of $U$ are eventually required to intersect the target open set
$V$ itself.

By contrast, $\delta$-topological mixing allows a spatial tolerance $\delta>0$:
for every pair of nonempty open sets $U,V\subset M$, there exists $N\in\mathbb{N}$
such that
\[
f^n(U)\cap B_\delta(V)\neq\varnothing \quad \text{for all } n\ge N,
\]
where $B_\delta(V)$ denotes the open $\delta$-neighbourhood of $V$.

Therefore, $\delta$-topological mixing is a quantitative weakening of classical
topological mixing. Moreover, if $0<\delta_1\le \delta_2$, then
$\delta_1$-TM implies $\delta_2$-TM, since $B_{\delta_1}(V)\subset B_{\delta_2}(V)$
for every nonempty set $V\subset M$.
\end{remark}

We now give an example showing that $\delta$-topological mixing is strictly weaker than classical topological mixing, and that the $\delta$-notions depend essentially on the underlying metric.

\begin{example}[Identity map under a bounded metric]
\label{ex:linear}
Let $(M,\rho)$ be a nontrivial normed space endowed with the capped metric
$ d_b(x,y)=\min\{1,\rho(x,y)\} $, introduced in Lemma~\ref{lem:bounded-metric}.
Then $\operatorname{diam}(M,d_b)\le 1$. Fix $\delta>1$ and consider the identity map
$f=\mathrm{Id}_M$.

We first show that $f$ is $\delta$-topologically mixing. Let $U,V\subset M$ be nonempty open sets. Since $\delta>\operatorname{diam}(M,d_b)$, Lemma~\ref{lem:diameter-neighborhood} yields $B_\delta(V)=M$. Hence, for every $n\in\mathbb N_0$, we have $f^n(U)=U$, so $f^n(U)\cap B_\delta(V)\neq\varnothing$. Thus $f$ is $\delta$-topologically mixing.

In fact, $f$ is even UFB-$\delta$-TM. Indeed, choose any $\eta\in(1,\delta)$. Since $\eta>\operatorname{diam}(M,d_b)$, Lemma~\ref{lem:diameter-neighborhood} again gives $B_\eta(V)=M$ for every nonempty subset $V\subset M$. It follows that for every pair of nonempty open sets $U,V\subset M$ and every $n\in\mathbb N_0$, we have $f^n(U)=U$, and hence $f^n(U)\cap B_\eta(V)\neq\varnothing$. Therefore $f$ is UFB-$\delta$-TM.

We now show that $f$ is not topologically mixing in the classical sense. Since $M$ is a nontrivial normed space, there exist distinct points $a,b\in M$. Let $r:=\rho(a,b)/4>0$, and define $U:=B_\rho(a,r)$ and $V:=B_\rho(b,r)$. Then $U$ and $V$ are nonempty open sets. Moreover, if $x\in U$ and $y\in V$, then
$\rho(x,y)\ge \rho(a,b)-\rho(x,a)-\rho(y,b)> \rho(a,b)-2r=\rho(a,b)/2>0$, so $U\cap V=\varnothing$.
Since $f^n(U)=U$ for all $n\in\mathbb N_0$, we obtain
\[
f^n(U)\cap V=\varnothing \qquad \text{for all } n\in\mathbb N_0.
\]
Thus $f$ is not topologically mixing.

Consequently, $\delta$-topological mixing does not imply classical topological mixing in general. This example also shows that the approximate notions may become trivial when the tolerance parameter exceeds the diameter of the underlying metric space.
\end{example}

The next example shows that the converse implication
\[
\delta\text{-TM}\Longrightarrow \mathrm{UFB\mbox{-}}\delta\text{-TM}
\]
fails even for a classical nonlinear minimal system.

\begin{example}[Irrational rotation: a nonlinear counterexample]
\label{ex:rotation}
Let \(M=\mathbb T^1=\mathbb R/\mathbb Z\) be the unit circle endowed with the
geodesic distance \(d\), so that \(\operatorname{diam}(M,d)=\tfrac12\).
Let \(f(x)=x+\alpha \pmod 1\), where \(\alpha\in\mathbb R\setminus\mathbb Q\), and fix
\(\delta=\tfrac12\).

We first show that \(f\) is \(\delta\)-topologically mixing.
Let \(V\subset M\) be a nonempty open set. Since \(V\) contains a nonempty open arc,
for every \(x\in M\) there exists \(y\in V\) with \(d(x,y)<\tfrac12\). Hence
\(B_{1/2}(V)=M\). Therefore, for any nonempty open sets \(U,V\subset M\) and any
\(n\in\mathbb N_0\), we have \(f^n(U)\subset M=B_{1/2}(V)\), so
\(f^n(U)\cap B_{1/2}(V)\neq\varnothing\). Thus \(f\) is \(\tfrac12\)-topologically
mixing.

We now show that \(f\) is not UFB-\(\delta\)-TM. Assume for contradiction that
\(f\) is UFB-\(\tfrac12\)-TM. Then there exists \(\eta\in(0,\tfrac12)\) such that
for every pair of nonempty open sets \(U,V\subset M\), there exists \(N\in\mathbb N\)
such that
\[
f^n(U)\cap B_\eta(V)\neq\varnothing
\qquad \text{for all } n\ge N.
\]

Choose a nonempty open arc \(V\subset M\) so small that \(B_\eta(V)\neq M\), and set
\(W:=B_\eta(V)\). Then \(W\) is a nonempty proper open subset of \(M\). By the above
property, for every nonempty open set \(U\subset M\), there exists \(N\in\mathbb N\)
such that
\[
f^n(U)\cap W\neq\varnothing
\qquad \text{for all } n\ge N.
\]
Thus \(f\) satisfies the classical topological mixing condition for the pair
\((U,W)\) and every nonempty open set \(U\subset M\).

However, irrational rotations on \(\mathbb T^1\) are not topologically mixing.
This contradiction shows that \(f\) is not UFB-\(\tfrac12\)-TM.

Consequently, \(\delta\)-topological mixing does not imply
UFB-\(\delta\)-topological mixing in general.
\end{example}

The next example shows that $\delta$-topological transitivity does not imply $\delta$-topological mixing, even in a classical minimal nonlinear system.

\begin{example}[Irrational rotation: $\delta$-TT does not imply $\delta$-TM]
\label{ex:irrational-deltaTT}
Let \(M=\mathbb T^1=\mathbb R/\mathbb Z\) be the unit circle endowed with the
geodesic distance \(d\), and let \(f(x)=x+\alpha \pmod 1\), where
\(\alpha\in\mathbb R\setminus\mathbb Q\). Fix \(0<\delta<\frac12\).

Then \(f\) is \(\delta\)-topologically transitive, but \(f\) is not
\(\delta\)-topologically mixing.

Indeed, since irrational rotations are minimal, they are in particular
topologically transitive. Hence, for every pair of nonempty open sets
\(U,V\subset M\), there exists \(n\in\mathbb N_0\) such that
\(f^n(U)\cap V\neq\varnothing\). Since \(V\subset B_\delta(V)\), it follows that
\(f^n(U)\cap B_\delta(V)\neq\varnothing\). Therefore \(f\) is
\(\delta\)-topologically transitive.

We now show that \(f\) is not \(\delta\)-topologically mixing. Choose nonempty open
arcs \(U,V\subset M\) so small that \(|U|+|V|+2\delta<1\), and set
\(W:=B_\delta(V)\). Then \(W\) is an open arc of length \(|W|=|V|+2\delta\), so
\(|U|+|W|<1\).

Fix representatives of the arcs on \(\mathbb R/\mathbb Z\), and write
\(W=(a,b)\) and \(U=(c,d)\), so that \(b-a=|W|\) and \(d-c=|U|\). Let
\[
A:=\{t\in\mathbb T^1:(U+t)\cap W\neq\varnothing\}.
\]
Indeed, if \(U\) is represented by an interval \((c,d)\), then \((U+t)\cap W\neq\varnothing\)
exactly when \(t\) belongs to the interval \((a-d,b-c)\) modulo \(1\), whose length is
\((b-a)+(d-c)=|W|+|U|<1\). Thus \(A\) is an open arc of length \(|W|+|U|<1\), and in particular
\(A\neq\mathbb T^1\).

For each \(n\ge0\), we have \(f^n(U)=U+n\alpha\). Hence
\[
f^n(U)\cap W\neq\varnothing
\quad\Longleftrightarrow\quad
n\alpha \pmod 1 \in A.
\]
Since \(\alpha\) is irrational, the orbit \(\{n\alpha \pmod 1:n\ge0\}\) is dense in
\(\mathbb T^1\). As \(\mathbb T^1\setminus A\) contains a nonempty open arc, there
exist infinitely many \(n\in\mathbb N\) such that \(n\alpha \pmod 1\notin A\). For
those \(n\), we have \(f^n(U)\cap W=\varnothing\), that is,
\(f^n(U)\cap B_\delta(V)=\varnothing\).

Thus the eventual-intersection condition in the definition of
\(\delta\)-topological mixing fails. Therefore \(f\) is not
\(\delta\)-topologically mixing.

Consequently, for every \(0<\delta<\frac12\), \(\delta\)-topological transitivity
does not imply \(\delta\)-topological mixing.
\end{example}

\begin{remark}[Topological versus metric mixing]
The previous examples highlight a distinction between
classical topological mixing and $\delta$-topological mixing.
Classical topological mixing is formulated purely in terms of intersections
of open sets, and therefore depends only on the topology of the underlying space.
By contrast, $\delta$-topological mixing depends explicitly on the metric,
since it requires eventual intersections with the $\delta$-neighbourhood
\(B_\delta(V)\) of a target set \(V\).

Thus, unlike classical topological mixing, the notion of $\delta$-topological
mixing is sensitive to the choice of metric. In this sense, it should be viewed
as a metric version of eventual mixing up to a prescribed spatial tolerance.
\end{remark}
\section{$\delta$-Hypercyclicity Criterion}

We begin by recalling a standard formulation of the classical Hypercyclicity Criterion.

\begin{definition}[Classical Hypercyclicity Criterion]
\label{def:HC-classical}
Let $X$ be a separable $F$-space and let $T:X\to X$ be a continuous linear operator.
We say that $T$ satisfies the \emph{Hypercyclicity Criterion} if there exist a strictly
increasing sequence of integers $(n_k)$, dense subsets $V,W\subset X$, and maps
$S_{n_k}:W\to X$ such that:
\begin{enumerate}
	\item for every $v\in V$, $T^{n_k}v\to 0$;
	\item for every $w\in W$, $S_{n_k}w\to 0$;
	\item for every $w\in W$, $T^{n_k}(S_{n_k}w)\to w$.
\end{enumerate}
\end{definition}

It is classical that, on separable $F$-spaces, the Hypercyclicity Criterion implies hypercyclicity; see, for example, \cite{GEP2011}.

We now formulate a $\delta$-approximate version of the Hypercyclicity Criterion in the metric setting.

\begin{definition}[$\delta$-Hypercyclicity Criterion]
\label{def:delta-HC-criterion}
Let $X$ be a separable topological vector space equipped with a translation-invariant
metric $d$ generating its topology, and let $T:X\to X$ be a continuous linear operator.
Fix $\delta>0$.

We say that $T$ satisfies the \emph{$\delta$-Hypercyclicity Criterion} if there exist a
strictly increasing sequence of integers $(n_k)$, dense subsets $V,W\subset X$, and maps
$S_{n_k}:W\to X$ such that:
\begin{enumerate}
	\item for every $v\in V$, $T^{n_k}v\to 0$;
	\item for every $w\in W$, $S_{n_k}w\to 0$;
	\item for every $w\in W$, there exists $k_0\in\mathbb N$ such that
	$d\bigl(T^{n_k}(S_{n_k}w),w\bigr)<\delta$ for all $k\ge k_0$.
\end{enumerate}
\end{definition}
We now show that the $\delta$-Hypercyclicity Criterion yields
$\delta$-hypercyclicity in the setting of separable F-spaces.

\begin{proposition}
\label{prop:criterion-implies-delta-hypercyclic}
Let $X$ be a separable F-space equipped with a translation-invariant metric $d$,
and let $T:X\to X$ be a continuous linear operator.
If $T$ satisfies the $\delta$-Hypercyclicity Criterion, then $T$ is
$\delta$-hypercyclic.
\end{proposition}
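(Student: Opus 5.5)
We will use a Baire category argument, following the classical proof that the Hypercyclicity Criterion implies hypercyclicity. We take \emph{$\delta$-hypercyclic} to mean that $T$ admits a $\delta$-transitive point in the sense of Definition~\ref{def:delta-transitive-point}, i.e.\ some $x\in X$ whose orbit is $\delta$-dense.

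Since $X$ is a separable metric space, fix a countable base $(O_j)_{j\ge1}$ of nonempty open sets. For each $j$ set
\[
G_j:=\bigcup_{n\ge0} T^{-n}\bigl(B_\delta(O_j)\bigr).
\]
Each $B_\delta(O_j)$ is open, because $x\mapsto\operatorname{dist}(x,O_j)$ is continuous. Each $T^n$ is continuous, so $G_j$ is open. Moreover, $x\in G_j$ if and only if $\inf_{n\ge0}\operatorname{dist}(T^nx,O_j)<\delta$.

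Suppose every $G_j$ is dense. Since $X$ is complete, the Baire category theorem makes $\bigcap_j G_j$ a dense $G_\delta$ set, in particular nonempty. Take $x$ in this intersection and let $V$ be any nonempty open set. Then $V\supset O_j$ for some $j$, so $\operatorname{dist}(\cdot,V)\le\operatorname{dist}(\cdot,O_j)$. Hence $\inf_n\operatorname{dist}(T^nx,V)<\delta$, and $x$ is $\delta$-transitive.

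The main step is proving that each $G_j$ is dense. Let $U$ be a nonempty open set. Using density of $V$ and $W$ from Definition~\ref{def:delta-HC-criterion}, pick $v\in V\cap U$ and $w\in W\cap O_j$, and choose $r>0$ with $B(w,r)\subset O_j$. Put $x_k:=v+S_{n_k}w$. By translation invariance, $d(x_k,v)=d(S_{n_k}w,0)\to0$, so $x_k\in U$ for large $k$.

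The delicate point is keeping the strict inequality $<\delta$. Adding the small error $T^{n_k}v$ to a quantity that is only known to be $<\delta$ could push it to $\ge\delta$. So we do not compare $T^{n_k}x_k$ with $w$ directly; we compare it with the shifted point $y_k:=w+T^{n_k}v$. Since $d(y_k,w)=d(T^{n_k}v,0)\to0$, we have $y_k\in B(w,r)\subset O_j$ for large $k$. Translation invariance and condition (3) then give, for $k\ge k_0$,
\[
d(T^{n_k}x_k,y_k)=d\bigl(T^{n_k}v+T^{n_k}S_{n_k}w,\;w+T^{n_k}v\bigr)=d\bigl(T^{n_k}S_{n_k}w,w\bigr)<\delta .
\]
Thus $\operatorname{dist}(T^{n_k}x_k,O_j)<\delta$, i.e.\ $x_k\in G_j\cap U$ for all large $k$, which proves density.

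This absorption of the vanishing error term into the open target set $O_j$ is the step I expect to need the most care. Everything else is routine.
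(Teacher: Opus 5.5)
Your proposal is correct and is essentially the paper's proof. Both use the same Baire-category argument over a countable base with the open sets $\bigcup_{n\ge0}T^{-n}(B_\delta(O_j))$, and the same key construction $x_k=v+S_{n_k}w$ compared against the shifted target $w+T^{n_k}v\in O_j$; the only difference is packaging, since the paper first records this construction as a proof that $T$ is $\delta$-topologically transitive and then deduces density, whereas you prove density of each $G_j$ directly.
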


\begin{proof}
Let $V,W\subset X$ be the dense subsets appearing in
Definition~\ref{def:delta-HC-criterion}.

We first show that $T$ is $\delta$-topologically transitive. Let $U,O\subset X$
be nonempty open sets, and choose $v\in V\cap U$ and $w\in W\cap O$.

Since $U-v$ is an open neighbourhood of $0$ and $S_{n_k}w\to0$, there exists
$k_1$ such that $v+S_{n_k}w\in U$ for all $k\ge k_1$.
Since $O-w$ is an open neighbourhood of $0$ and $T^{n_k}v\to0$, there exists
$k_2$ such that $w+T^{n_k}v\in O$ for all $k\ge k_2$.
By condition \textup{(3)}, there exists $k_3$ such that
$d\bigl(T^{n_k}(S_{n_k}w),w\bigr)<\delta$ for all $k\ge k_3$.

Fix $k\ge \max\{k_1,k_2,k_3\}$ and set $x:=v+S_{n_k}w$. Then $x\in U$, and
\[
d\bigl(T^{n_k}x,\;w+T^{n_k}v\bigr)
=
d\bigl(T^{n_k}(S_{n_k}w),w\bigr)
<\delta.
\]
Since $w+T^{n_k}v\in O$, it follows that $T^{n_k}x\in B_\delta(O)$. Hence
$T^{n_k}(U)\cap B_\delta(O)\neq\varnothing$, and therefore $T$ is
$\delta$-topologically transitive.

Let $(O_m)_{m\ge1}$ be a countable base of nonempty open sets of $X$, which exists
because $X$ is separable and metrizable. For each $m\ge1$, define
\[
E_m:=\bigcup_{n\ge0} T^{-n}\bigl(B_\delta(O_m)\bigr).
\]
Each $E_m$ is open, since $T$ is continuous and $B_\delta(O_m)$ is open.

We claim that each $E_m$ is dense. Let $U\subset X$ be a nonempty open set. Since
$T$ is $\delta$-topologically transitive, there exists $n\ge0$ such that
$T^n(U)\cap B_\delta(O_m)\neq\varnothing$. Hence there exists $x\in U$ with
$T^n x\in B_\delta(O_m)$, that is, $x\in U\cap E_m$. Thus $U\cap E_m\neq\varnothing$,
and $E_m$ is dense.

Since $X$ is an F-space, it is a Baire space. Therefore the intersection
\[
E:=\bigcap_{m\ge1} E_m
\]
is dense and in particular nonempty. Let $x\in E$. Then for every $m\ge1$, there
exists $n\ge0$ such that $T^n x\in B_\delta(O_m)$.

Now let $O\subset X$ be any nonempty open set. Choose a basis element $O_m$ such that
$O_m\subset O$. Since $x\in E_m$, there exists $n\ge0$ with $T^n x\in B_\delta(O_m)$.
Because $O_m\subset O$, we have $B_\delta(O_m)\subset B_\delta(O)$, and hence
$T^n x\in B_\delta(O)$. This shows that the orbit of $x$ is $\delta$-dense in $X$.

Therefore $x$ is a $\delta$-hypercyclic vector for $T$, and so $T$ is
$\delta$-hypercyclic.
\end{proof}

We next show that the classical Hypercyclicity Criterion implies its approximate $\delta$-version.

\begin{proposition}[Classical HC $\Rightarrow$ $\delta$-HC]
\label{prop:HC-implies-deltaHC}
Let $X$ be a separable F-space equipped with a translation-invariant metric $d$
compatible with its topology, and let $T:X\to X$ be a continuous linear operator.

If $T$ satisfies the classical Hypercyclicity Criterion, then it satisfies
the $\delta$-Hypercyclicity Criterion for every $\delta>0$.
\end{proposition}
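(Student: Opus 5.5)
The plan is to keep the data supplied by the classical criterion unchanged. Suppose $T$ satisfies the classical Hypercyclicity Criterion (Definition~\ref{def:HC-classical}) with a strictly increasing sequence $(n_k)$, dense sets $V,W\subset X$, and maps $S_{n_k}:W\to X$. Fix an arbitrary $\delta>0$. We will show that this same triple $\bigl((n_k),V,W,(S_{n_k})\bigr)$ witnesses the $\delta$-Hypercyclicity Criterion of Definition~\ref{def:delta-HC-criterion}. Since $\delta$ is arbitrary, this gives the statement for every $\delta>0$.

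\textbf{Conditions (1) and (2).} These are identical in the two definitions, so they carry over verbatim. The only point to note is the ambient structure. Definition~\ref{def:delta-HC-criterion} asks for a separable topological vector space carrying a translation-invariant metric $d$ that generates its topology. By hypothesis, $X$ is a separable F-space and $d$ is such a metric. Convergence to $0$ in (1) and (2) refers to this topology, and hence is equivalent to $d(\cdot,0)\to 0$.

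\textbf{Condition (3).} Fix $w\in W$. The classical condition gives $T^{n_k}(S_{n_k}w)\to w$ in the topology of $X$. Because $d$ is compatible with that topology, this means $d\bigl(T^{n_k}(S_{n_k}w),w\bigr)\to 0$. By the definition of a limit there is $k_0$ with
\[
d\bigl(T^{n_k}(S_{n_k}w),w\bigr)<\delta \qquad \text{for all } k\ge k_0,
\]
which is exactly the $\delta$-condition (3). Translation invariance is not needed here; it is needed only later, in Proposition~\ref{prop:criterion-implies-delta-hypercyclic}.

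I expect no genuine obstacle. The only step requiring care is the passage from topological convergence to metric convergence, which is where compatibility of $d$ with the F-space topology is used. Combining this result with Proposition~\ref{prop:criterion-implies-delta-hypercyclic} then shows that every operator satisfying the classical criterion is $\delta$-hypercyclic for every $\delta>0$.
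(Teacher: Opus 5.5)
Your proposal is correct and takes the same approach as the paper. You reuse the same witnesses, carry over conditions (1) and (2) unchanged, and obtain condition (3) from the fact that topological convergence $T^{n_k}(S_{n_k}w)\to w$ implies $d\bigl(T^{n_k}(S_{n_k}w),w\bigr)\to 0$, because $d$ is compatible with the topology.
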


\begin{proof}
If $T$ satisfies the classical Hypercyclicity Criterion, then there exist dense
sets $V,W\subset X$, a strictly increasing sequence $(n_k)$, and maps
$S_{n_k}:W\to X$ such that, for all $v\in V$ and $w\in W$, one has
$T^{n_k}v\to 0$, $S_{n_k}w\to 0$, and $T^{n_k}(S_{n_k}w)\to w$.

Since $d$ is compatible with the topology of $X$, the convergence
$T^{n_k}(S_{n_k}w)\to w$ implies
$d\bigl(T^{n_k}(S_{n_k}w),w\bigr)\to 0$.
Therefore, for every fixed $\delta>0$ and every $w\in W$, there exists
$k_0\in\mathbb N$ such that
$d\bigl(T^{n_k}(S_{n_k}w),w\bigr)<\delta$ for all $k\ge k_0$.
Thus condition \textup{(3)} of the $\delta$-Hypercyclicity Criterion holds.
Conditions \textup{(1)} and \textup{(2)} are identical in the two criteria.
Hence $T$ satisfies the $\delta$-Hypercyclicity Criterion.
\end{proof}

We conclude this section by observing that the $\delta$-Hypercyclicity Criterion yields eventual $\delta$-mixing along the prescribed sequence $(n_k)$.

\begin{proposition}[Mixing along the prescribed sequence]
\label{prop:delta-HC-sequence}
Let $X$ be a separable F-space equipped with a translation-invariant metric
$d$ compatible with its topology, and let $T:X\to X$ be a continuous linear
operator. Fix $\delta>0$.
Assume that $T$ satisfies the $\delta$-Hypercyclicity Criterion along a
strictly increasing sequence $(n_k)$.
Then for every pair of nonempty open sets $U,O\subset X$, there exists
$K\in\mathbb N$ such that
\[
T^{n_k}(U)\cap B_\delta(O)\neq\varnothing
\qquad \text{for all } k\ge K.
\]
\end{proposition}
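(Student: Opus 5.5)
The plan is to rerun the first half of the proof of Proposition~\ref{prop:criterion-implies-delta-hypercyclic}, observing that each convergence used there holds for all sufficiently large $k$, not only for one $k$. Let $V,W\subset X$, $(n_k)$ and $S_{n_k}:W\to X$ be the data from Definition~\ref{def:delta-HC-criterion}. Given nonempty open sets $U,O\subset X$, density of $V$ and $W$ lets us pick $v\in V\cap U$ and $w\in W\cap O$.

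We then produce three thresholds. Since $U-v$ is an open neighbourhood of $0$ and $S_{n_k}w\to 0$, there is $k_1$ with $v+S_{n_k}w\in U$ for all $k\ge k_1$. Since $O-w$ is an open neighbourhood of $0$ and $T^{n_k}v\to0$, there is $k_2$ with $w+T^{n_k}v\in O$ for all $k\ge k_2$. Condition (3) of the criterion gives $k_3$ with $d\bigl(T^{n_k}(S_{n_k}w),w\bigr)<\delta$ for all $k\ge k_3$. Set $K:=\max\{k_1,k_2,k_3\}$.

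For every $k\ge K$, put $x_k:=v+S_{n_k}w\in U$. Linearity of $T$ and translation invariance of $d$ give
\[
d\bigl(T^{n_k}x_k,\;w+T^{n_k}v\bigr)=d\bigl(T^{n_k}(S_{n_k}w),w\bigr)<\delta .
\]
Since $w+T^{n_k}v\in O$, it follows that $\operatorname{dist}(T^{n_k}x_k,O)<\delta$, so $T^{n_k}x_k\in B_\delta(O)$. Hence $T^{n_k}(U)\cap B_\delta(O)\neq\varnothing$ for all $k\ge K$, which is the claim.

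There is no real obstacle. The only point needing care is that $K$ must work uniformly in $k$ for the fixed pair $U,O$. This holds because the choices of $v$ and $w$ do not depend on $k$, and each of the three conditions is an eventual (tail) statement in $k$. Completeness and separability of $X$ are not used; only linearity, continuity and translation invariance of $d$ enter.
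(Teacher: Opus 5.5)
Your proposal is correct and follows the paper's proof essentially step for step. You use the same choice of $v\in V\cap U$ and $w\in W\cap O$, the same three tail thresholds combined into $K=\max\{k_1,k_2,k_3\}$, and the same translation-invariance computation for $x_k=v+S_{n_k}w$. Your closing remark that completeness and separability play no role in this particular argument is also accurate.
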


\begin{proof}
Let $V,W\subset X$ be the dense subsets appearing in
Definition~\ref{def:delta-HC-criterion}.

Let $U,O\subset X$ be nonempty open sets, and choose
$v\in V\cap U$ and $w\in W\cap O$.

Since $U-v$ is an open neighbourhood of $0$ and $S_{n_k}w\to0$,
there exists $K_1\in\mathbb N$ such that $v+S_{n_k}w\in U$ for all $k\ge K_1$.
Since $O-w$ is an open neighbourhood of $0$ and $T^{n_k}v\to0$,
there exists $K_2\in\mathbb N$ such that $w+T^{n_k}v\in O$ for all $k\ge K_2$.
By condition \textup{(3)} of the $\delta$-Hypercyclicity Criterion, there exists
$K_3\in\mathbb N$ such that
$d\bigl(T^{n_k}(S_{n_k}w),w\bigr)<\delta$ for all $k\ge K_3$.

Let $K:=\max\{K_1,K_2,K_3\}$. Fix $k\ge K$, and define $x_k:=v+S_{n_k}w$.
Then $x_k\in U$, and
\[
d\bigl(T^{n_k}x_k,\;w+T^{n_k}v\bigr)
=
d\bigl(T^{n_k}(S_{n_k}w),w\bigr)
<\delta.
\]
Since $w+T^{n_k}v\in O$, it follows that $T^{n_k}x_k\in B_\delta(O)$.
Hence
\[
T^{n_k}(U)\cap B_\delta(O)\neq\varnothing
\qquad \text{for all } k\ge K.
\]
This proves the result.
\end{proof}

\section{Weighted shifts}

Weighted backward shifts are among the most classical examples in linear
dynamics. Their action is explicit, and they play a central role in the
study of hypercyclicity, mixing, and related dynamical properties.

A foundational result in this direction is due to Rolewicz
\cite{Rolewicz1969}, who proved that the operator $\lambda B$ is
hypercyclic on $\ell^p$ whenever $|\lambda|>1$.
Weighted shifts also provide a standard source of examples and
counterexamples in the modern theory of linear dynamics. In particular,
Salas obtained a characterization of hypercyclic weighted shifts; see
\cite{Salas1995}. For broader background on linear dynamics and weighted
shifts, we also refer to \cite{GEP2011}.

We now recall the corresponding definitions.

\begin{definition}[Weighted backward shifts]
Let $X=\ell^{2}(\mathbb N_0)$ in the unilateral case, or
$X=\ell^{2}(\mathbb Z)$ in the bilateral case.
Let $(e_i)$ denote the canonical basis, and let $(a_i)$ be a bounded sequence
of nonzero scalars.

The \emph{weighted backward shift} $T$ is defined by
\[
T(e_i)=a_i e_{i-1},
\]
for $i\ge 1$ in the unilateral case, and for $i\in\mathbb Z$ in the bilateral
case, with the convention $T(e_0)=0$ in the unilateral case.

Its iterates satisfy, in the unilateral case,
\[
T^n e_k=
\begin{cases}
	\left(\displaystyle\prod_{j=k-n+1}^{k} a_j\right)e_{k-n}, & k\ge n,\\[0.2cm]
	0, & k<n,
\end{cases}
\]
and, in the bilateral case,
\[
T^n e_k=
\left(\displaystyle\prod_{j=k-n+1}^{k} a_j\right)e_{k-n},
\qquad k\in\mathbb Z,\ n\ge 0.
\]

For later use, we denote the corresponding weight products by
\[
A_k^{(n)}:=\prod_{j=k-n+1}^{k} a_j,
\]
whenever the indices are in the admissible range.
\end{definition}

In the classical setting, the following mixing characterisation for weighted
backward shifts is standard; see, for example, \cite{GEP2011}.

\begin{theorem}[Classical mixing theorem for weighted shifts]
\label{thm:classical-mixing-shifts}
Let $T$ be a weighted backward shift.

\begin{enumerate}
	\item[\textnormal{(a)}]
	In the unilateral case, $T$ is topologically mixing if and only if
	$\lim_{n\to\infty}\prod_{i=1}^n |a_i|=\infty$.
	
	\item[\textnormal{(b)}]
	In the bilateral case, $T$ is topologically mixing if and only if
	$\lim_{n\to\infty}\prod_{i=1}^n |a_i|=\infty$ and
	$\lim_{n\to\infty}\prod_{i=0}^n |a_{-i}|=\infty$.
\end{enumerate}
\end{theorem}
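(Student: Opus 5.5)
We prove the classical mixing theorem for weighted backward shifts, treating the unilateral case (a) first and then adapting it to the bilateral case (b). Throughout, $c_{00}$ denotes the dense subspace of finitely supported sequences. For the unilateral shift we use the standard right inverse on $c_{00}$,
\[
S e_k := a_{k+1}^{-1} e_{k+1}, \qquad\text{so that}\qquad S^n e_k=\Bigl(\prod_{j=k+1}^{k+n} a_j\Bigr)^{-1} e_{k+n},\quad T^nS^n=\mathrm{Id}\ \text{on } c_{00}.
\]

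\textbf{Sufficiency in (a).} Assume $\prod_{i=1}^n|a_i|\to\infty$.
\begin{enumerate}
\item Take $V=W=c_{00}$. For $v\in c_{00}$ we have $T^n v=0$ as soon as $n$ exceeds the support of $v$.
\item By linearity, $S^n w\to0$ for $w\in c_{00}$ reduces to $S^n e_k\to0$ for each fixed $k$. The coefficient of $S^n e_k$ is $\bigl(\prod_{j=1}^{k+n}|a_j|\bigr)^{-1}\prod_{j=1}^{k}|a_j|$, which tends to $0$ by hypothesis.
\item These conditions hold along the \emph{full} sequence $n_k=k$, with $T^nS^nw=w$ exactly.
\item Now run the argument of Proposition~\ref{prop:delta-HC-sequence} with $\delta$ replaced by exact intersections. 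For nonempty open $U,O$, pick $v\in U\cap c_{00}$ and $w\in O\cap c_{00}$, and set $x_n=v+S^nw$. Then $x_n\in U$ and $T^nx_n=T^nv+w\in O$ for all large $n$.
\end{enumerate}
Hence $T^n(U)\cap O\neq\varnothing$ eventually, which is topological mixing.

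\textbf{Necessity in (a).} Assume $T$ is mixing.
\begin{enumerate}
\item Take $U=B(e_0,\varepsilon)$ and $O=B(e_0,\varepsilon)$ with $\varepsilon$ small. For every large $n$ there is $x\in U$ with $T^nx\in O$.
\item The $0$-th coordinate of $T^nx$ equals $\bigl(\prod_{j=1}^{n}a_j\bigr)x_n$, and it is close to $1$. Since $x\in U$, the $n$-th coordinate satisfies $|x_n|<\varepsilon$.
\item Therefore $\prod_{j=1}^n|a_j|\ge (1-\varepsilon)/\varepsilon$ for all large $n$.
\item Letting $\varepsilon\to0$ gives $\prod_{j=1}^n|a_j|\to\infty$.
\end{enumerate}

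\textbf{Bilateral case (b).} The argument has the same structure, but $T^nv$ no longer vanishes on $c_{00}$, so $T^ne_k\to0$ must be shown explicitly.
\begin{enumerate}
\item For fixed $k$, the coefficient of $T^ne_k$ is $\prod_{j=k-n+1}^{k}|a_j|$. This equals a fixed constant divided by $\prod_{i=0}^{n-k-1}|a_{-i}|$ (up to index shifts), so it tends to $0$ by the second hypothesis.
\item For fixed $k$, the coefficient of $S^ne_k$ is controlled by the positive products, so $S^ne_k\to0$ by the first hypothesis.
\item Sufficiency then follows exactly as in the unilateral case.
\item For necessity, the same $e_0$ computation gives the condition on positive indices. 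The negative-index condition comes from the coordinate at $-n$. Take $x$ near $e_0$ with $T^nx$ near $0$, using the pair $U=B(e_0,\varepsilon)$, $O=B(0,\varepsilon)$. Then $(T^nx)_{-n}=\bigl(\prod_{j=-n+1}^{0}a_j\bigr)x_0$ with $|x_0|\approx1$, which forces $\prod_{j=-n+1}^{0}|a_j|<2\varepsilon$ eventually.
\item Finally, convert this to the stated form, which is a $\to\infty$ condition.
\end{enumerate}

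The main obstacle is the bilateral necessity argument. The index conventions must match the theorem's normalization, and the statement as written gives $\prod|a_{-i}|\to\infty$, whereas the computation in step 4 naturally yields a decay condition on the negative weights. For the backward shift the correct condition is that products over negative indices tend to $0$ (equivalently, their inverses tend to $\infty$). I expect the intended meaning is this reciprocal condition, and the proof must be written so that it reconciles with that convention.
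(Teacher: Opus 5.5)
Part (a) of your proposal is correct and is the standard argument. Sufficiency comes from the Hypercyclicity Criterion along the full sequence, with the right inverse $S$ on $c_{00}$. Necessity comes from the estimate $|(T^nx)_0|=\prod_{j=1}^n|a_j|\,|x_n|$ for $x$ near $e_0$. The paper gives no proof of this theorem; it only quotes it from Grosse-Erdmann--Peris.

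Part (b) is where your proposal breaks, and it cannot be repaired, because (b) is false as written. In step 1 of your bilateral sufficiency argument the key relation is inverted. For $k\ge0$ and $n>k$,
\[
\prod_{j=k-n+1}^{k}|a_j|=\Bigl(\prod_{j=1}^{k}|a_j|\Bigr)\prod_{i=0}^{n-k-1}|a_{-i}|,
\]
and for $k<0$ the negative-index product again sits in the numerator. So under the stated hypothesis $\prod_{i=0}^n|a_{-i}|\to\infty$ you get $\|T^ne_k\|\to\infty$, not $0$, and the sufficiency argument collapses.

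Step 5 is impossible for the same reason. Your step 4 computation is correct, and it proves $\prod_{j=-n+1}^{0}|a_j|\to0$, which is incompatible with the stated $\to\infty$.

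Concrete counterexamples confirm that (b) is false in both directions:
\begin{enumerate}
\item Take $a_i\equiv\lambda$ with $|\lambda|>1$. Both stated conditions hold, but $\|T^nx\|=|\lambda|^n\|x\|$. Hence $T^n\bigl(B(e_0,\tfrac12)\bigr)\cap B(0,\tfrac12)=\varnothing$ for all $n\ge0$, so $T$ is not even transitive.
\item Take $a_i=2$ for $i\ge1$ and $a_i=\tfrac12$ for $i\le0$. This shift is mixing (by your own corrected argument), yet it violates the stated condition.
\end{enumerate}
The correct classical condition, as in Grosse-Erdmann--Peris, is $\prod_{i=0}^{n}|a_{-i}|\to0$.

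Rather than hedging and promising to ``convert to the stated form'', you should say explicitly that (b) is misstated, give such a counterexample, and prove the corrected version. For the corrected version your proof goes through once ``divided by'' in step 1 is replaced by ``multiplied by''. Your step 4 already yields exactly the corrected necessary condition.

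The same example $\lambda B$ on $\ell^2(\mathbb Z)$ also shows that Proposition~\ref{prop:delta-mixing-bilateral-safe}, which rests on (b) as stated, fails: this operator is not $\delta$-topologically mixing for any $\delta>0$.
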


In the remainder of this section we discuss approximate analogues of these
classical conditions.

\begin{proposition}
Let $T$ be a unilateral weighted backward shift on $\ell^2(\mathbb N_0)$
with weight sequence $(a_i)_{i\ge1}$, and let $\delta>0$.
If $\lim_{n\to\infty}\prod_{i=1}^n |a_i|=\infty$, then $T$ is
$\delta$-topologically mixing.
\end{proposition}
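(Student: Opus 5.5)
The quickest route is through Theorem~\ref{thm:classical-mixing-shifts}(a): under the hypothesis $\prod_{i=1}^n|a_i|\to\infty$, the unilateral weighted backward shift $T$ is topologically mixing in the classical sense. So for any nonempty open sets $U,V\subset\ell^2(\mathbb N_0)$ there is $N\in\mathbb N$ with $T^n(U)\cap V\neq\varnothing$ for all $n\ge N$. Since $\delta>0$, we have $V\subset B_\delta(V)$, hence $T^n(U)\cap B_\delta(V)\neq\varnothing$ for all $n\ge N$. This is exactly Definition~\ref{def:delta-TM}, with $d$ the norm metric of $\ell^2$. The same inclusion shows that classical mixing implies $\delta$-TM for every $\delta>0$ in any metric space.

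To avoid relying solely on the cited classical theorem, I would also give a direct argument in the style of Proposition~\ref{prop:delta-HC-sequence}, but along the full sequence $n_k=k$. Take $V_0=W_0=c_{00}$, the finitely supported sequences, which are dense in $\ell^2$. For $v\in c_{00}$ we have $T^n v=0$ once $n$ exceeds the support, so $T^n v\to0$ trivially. Define $S_n e_k:=\bigl(A_{k+n}^{(n)}\bigr)^{-1}e_{k+n}$, extended linearly to $c_{00}$; then $T^nS_nw=w$ exactly. Choose $v\in U\cap c_{00}$ and $w\in O\cap c_{00}$, and set $x_n:=v+S_nw$. For large $n$ we get $x_n\in U$, provided $S_nw\to0$, and $T^nx_n=T^nv+w=w\in O$. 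Hence $T^n(U)\cap O\neq\varnothing$ eventually, which a fortiori gives the $\delta$-version.

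The main step, and the only real obstacle, is verifying $S_nw\to0$, i.e.\ $|A_{k+n}^{(n)}|=\prod_{j=k+1}^{k+n}|a_j|\to\infty$ as $n\to\infty$ for each fixed $k$. This follows because
\[
\prod_{j=k+1}^{k+n}|a_j|=\frac{\prod_{j=1}^{k+n}|a_j|}{\prod_{j=1}^{k}|a_j|}.
\]
Here the denominator is a fixed nonzero constant, since all the weights are nonzero. The numerator tends to $\infty$ by hypothesis. Since $w$ has finite support, $\|S_nw\|\le\sum_{k\in\operatorname{supp}w}|w_k|\,|A_{k+n}^{(n)}|^{-1}\to0$. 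Boundedness of the weights is used only to ensure $T$ is a bounded operator.
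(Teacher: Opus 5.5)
Your proof is correct, and your first paragraph is exactly the paper's argument: invoke Theorem~\ref{thm:classical-mixing-shifts}(a) to get classical mixing, then use $V\subset B_\delta(V)$. Your supplementary direct argument with $S_n e_k=(A^{(n)}_{k+n})^{-1}e_{k+n}$ on $c_{00}$ is also sound, and it has the added merit of proving classical mixing outright (the sufficiency half of Theorem~\ref{thm:classical-mixing-shifts}(a)), so the result no longer rests on the cited theorem.
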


\begin{proof}
Under the above condition, $T$ is topologically mixing by the classical
mixing theorem for unilateral weighted shifts. Hence, for every pair of
nonempty open sets $U,V$, there exists $N$ such that
$T^n(U)\cap V\neq\varnothing$ for all $n\ge N$.
Since $V\subset B_\delta(V)$, it follows that
$T^n(U)\cap B_\delta(V)\neq\varnothing$ for all $n\ge N$.
Therefore $T$ is $\delta$-topologically mixing.
\end{proof}
\begin{remark}
\label{rem:delta-hypercyclic-not-delta-mixing}
There exist bounded sequences of strictly positive weights $(a_i)_{i\ge1}$ such that
\[
\sup_{n\ge1}\prod_{i=1}^{n} a_i=\infty,
\qquad\text{but}\qquad
\lim_{n\to\infty}\prod_{i=1}^{n} a_i\neq\infty.
\]
For such a sequence, the associated unilateral weighted backward shift may be
hypercyclic, and hence $\delta$-hypercyclic for every $\delta>0$.

On the other hand, the classical mixing criterion for unilateral weighted shifts
requires
\[
\lim_{n\to\infty}\prod_{i=1}^{n} |a_i|=\infty.
\]
Thus there exist unilateral weighted shifts that are hypercyclic, but not
topologically mixing. This shows that, even in the weighted-shift setting,
hypercyclicity is strictly weaker than classical topological mixing.
\end{remark}


The classical bilateral mixing criterion immediately yields a sufficient condition for $\delta$-topological mixing.
We now consider bilateral weighted backward shifts on $\ell^2(\mathbb Z)$
endowed with its usual norm.

\begin{proposition}
\label{prop:delta-mixing-bilateral-safe}
Let $T$ be a bilateral weighted backward shift on $\ell^2(\mathbb Z)$
with weight sequence $(a_i)_{i\in\mathbb Z}$, and let $\delta>0$.
If $\lim_{n\to\infty}\prod_{i=1}^n |a_i|=\infty$ and
$\lim_{n\to\infty}\prod_{i=0}^n |a_{-i}|=\infty$, then $T$ is
$\delta$-topologically mixing.
\end{proposition}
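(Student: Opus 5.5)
The plan is to mirror the unilateral case exactly. We reduce to the classical bilateral mixing criterion, Theorem~\ref{thm:classical-mixing-shifts}(b), and then pass from exact intersections to $\delta$-neighbourhoods by monotonicity.

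First, under the two hypotheses
\[
\lim_{n\to\infty}\prod_{i=1}^n |a_i|=\infty
\qquad\text{and}\qquad
\lim_{n\to\infty}\prod_{i=0}^n |a_{-i}|=\infty,
\]
part (b) of Theorem~\ref{thm:classical-mixing-shifts} gives that $T$ is topologically mixing on $\ell^2(\mathbb Z)$. Hence for every pair of nonempty open sets $U,V\subset\ell^2(\mathbb Z)$ there is $N\in\mathbb N$ with $T^n(U)\cap V\neq\varnothing$ for all $n\ge N$.

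Second, we note that $V\subset B_\delta(V)$ for every $\delta>0$, since each $y\in V$ satisfies $\operatorname{dist}(y,V)=0<\delta$. Therefore $T^n(U)\cap B_\delta(V)\supset T^n(U)\cap V\neq\varnothing$ for all $n\ge N$. This is precisely the condition in Definition~\ref{def:delta-TM}, so $T$ is $\delta$-topologically mixing. The metric here is the norm metric of $\ell^2(\mathbb Z)$, which is compatible with the topology, so the classical theorem applies to the same open sets.

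I expect no real obstacle, since the argument is a direct application of the cited classical result. The only point requiring care is to check that the indexing conventions in Theorem~\ref{thm:classical-mixing-shifts}(b) agree with those of the statement. Both use the products $\prod_{i=1}^n|a_i|$ over positive indices and $\prod_{i=0}^n|a_{-i}|$ over nonpositive indices, matching the convention $Te_i=a_ie_{i-1}$.

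If one preferred a self-contained proof, the alternative would be to verify the classical Hypercyclicity Criterion along the full sequence $n_k=k$. One would take $V=W=c_{00}$ and let $S_n e_k=(A^{(n)}_{k+n})^{-1}e_{k+n}$ be the weighted forward shift. Condition (3) of the criterion then holds exactly, and conditions (1) and (2) follow from the two product hypotheses. Combining this with Proposition~\ref{prop:HC-implies-deltaHC} and Proposition~\ref{prop:delta-HC-sequence} yields eventual $\delta$-mixing along all $n$, which is $\delta$-TM. I would only fall back on this route if the citation were deemed insufficient.
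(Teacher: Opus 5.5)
Your main route is exactly the paper's proof: cite Theorem~\ref{thm:classical-mixing-shifts}(b), then use $V\subset B_\delta(V)$. It fails for the same reason the paper's proof does. Theorem~\ref{thm:classical-mixing-shifts}(b) is misquoted, and the proposition is false as stated.

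Your indexing check is not the issue, since the statement and Theorem (b) agree with each other. The problem is the direction of the second limit. For $Te_i=a_ie_{i-1}$ on $\ell^2(\mathbb Z)$, mixing forces $\prod_{i=0}^{n-1}|a_{-i}|\to0$. Indeed, suppose $x$ and $T^nx$ both lie within $\varepsilon<\tfrac12$ of $e_0$. The $(-n)$-th coordinate of $T^nx$ is $\bigl(\prod_{i=0}^{n-1}a_{-i}\bigr)x_0$, which must have modulus $<\varepsilon$ while $|x_0|>1-\varepsilon$. So the correct classical criterion is $\prod_{i=1}^n|a_i|\to\infty$ and $\prod_{i=1}^n|a_{-i}|\to 0$.

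Here is a concrete counterexample to the proposition. The weights $a_i\equiv 2$ satisfy both stated hypotheses, but $T=2B$ with $B$ the unitary bilateral shift, so $\|T^nx\|=2^n\|x\|$. Let $V$ be the open unit ball, and let $U$ be the open unit ball centred at a vector of norm $2+\delta$. Then $B_\delta(V)$ is the open ball of radius $1+\delta$, while every point of $T^n(U)$ has norm $>1+\delta$. Hence $T^n(U)\cap B_\delta(V)=\varnothing$ for all $n\ge 0$, and $T$ is not even $\delta$-TT.

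Your fallback route would have exposed this. With your $S_n$, conditions (2) and (3) are fine. Condition (1), however, needs $T^ne_0=\bigl(\prod_{i=0}^{n-1}a_{-i}\bigr)e_{-n}\to 0$, which is the opposite of the second hypothesis. Under the stated hypotheses in fact $\|T^ne_0\|\to\infty$. So your claim that (1) ``follows from the two product hypotheses'' is false.

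If you replace the second hypothesis by $\prod_{i=1}^n|a_{-i}|\to 0$, both of your routes become valid. The first works via the correctly stated classical theorem. The second works because $c_{00}$ and your $S_n$ then verify the classical Hypercyclicity Criterion along $n_k=k$. Propositions~\ref{prop:HC-implies-deltaHC} and~\ref{prop:delta-HC-sequence} then give $\delta$-TM for every $\delta>0$.
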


\begin{proof}
Under the above assumptions, $T$ is topologically mixing by the classical
mixing theorem for bilateral weighted backward shifts. Hence, for every pair
of nonempty open sets $U,V\subset \ell^2(\mathbb Z)$, there exists $N\in\mathbb N$
such that $T^n(U)\cap V\neq\varnothing$ for all $n\ge N$. Since
$V\subset B_\delta(V)$, it follows that
$T^n(U)\cap B_\delta(V)\neq\varnothing$ for all $n\ge N$.
Therefore $T$ is $\delta$-topologically mixing.
\end{proof}

Combining the classical mixing criteria for unilateral and bilateral weighted
backward shifts with the definition of $\delta$-topological mixing, we obtain
the following sufficient condition.

\begin{corollary}
Let $T$ be a weighted backward shift.

\begin{enumerate}
	\item[\textnormal{(a)}]
	In the unilateral case, if $\lim_{n\to\infty}\prod_{i=1}^n |a_i|=\infty$,
	then $T$ is $\delta$-topologically mixing for every $\delta>0$.
	
	\item[\textnormal{(b)}]
	In the bilateral case, if $\lim_{n\to\infty}\prod_{i=1}^n |a_i|=\infty$ and
	$\lim_{n\to\infty}\prod_{i=0}^n |a_{-i}|=\infty$, then $T$ is
	$\delta$-topologically mixing for every $\delta>0$.
\end{enumerate}
\end{corollary}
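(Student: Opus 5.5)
The corollary follows by combining the two preceding propositions. The one point to check is that neither of those propositions places any restriction on $\delta$.

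For part (a), fix an arbitrary $\delta>0$. We assume $\lim_{n\to\infty}\prod_{i=1}^n|a_i|=\infty$. By Theorem~\ref{thm:classical-mixing-shifts}(a), $T$ is topologically mixing. So for any nonempty open sets $U,V\subset\ell^2(\mathbb N_0)$ there is an $N$ with $T^n(U)\cap V\neq\varnothing$ for all $n\ge N$. Since $V\subset B_\delta(V)$, the same $N$ witnesses $T^n(U)\cap B_\delta(V)\neq\varnothing$ for all $n\ge N$. This is exactly Definition~\ref{def:delta-TM}, and it is also the content of the unilateral proposition above. The index $N$ produced by classical mixing depends only on $U$ and $V$, not on $\delta$, so the argument is uniform in $\delta$. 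Because $\delta$ was arbitrary, we get the conclusion for every $\delta>0$.

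Part (b) goes the same way. Both limit conditions hold, so Theorem~\ref{thm:classical-mixing-shifts}(b) gives classical topological mixing on $\ell^2(\mathbb Z)$. We then either invoke Proposition~\ref{prop:delta-mixing-bilateral-safe} for each fixed $\delta>0$, or repeat the containment $V\subset B_\delta(V)$ directly. Alternatively, the remark comparing classical and $\delta$-mixing notes that $\delta_1$-TM implies $\delta_2$-TM whenever $\delta_1\le\delta_2$. Hence it would also suffice to pass from classical mixing, read as the limiting case with no tolerance, to every $\delta$ at once.

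I do not expect any real obstacle. The only care needed is to state explicitly that the quantifier ``for every $\delta>0$'' is placed outside and that the threshold $N$ does not depend on $\delta$. The proof therefore amounts to a sentence or two per part, citing Theorem~\ref{thm:classical-mixing-shifts} and the two preceding propositions.
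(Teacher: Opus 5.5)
Your treatment of (a) is the paper's argument: the classical unilateral mixing criterion plus $V\subset B_\delta(V)$, with $\delta$ arbitrary. It is correct.

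The problem is (b). The step ``by Theorem~\ref{thm:classical-mixing-shifts}(b), $T$ is topologically mixing'' fails because that theorem is misquoted in the paper. For $Te_i=a_ie_{i-1}$ on $\ell^2(\mathbb Z)$, the classical mixing criterion is $\prod_{i=1}^n|a_i|\to\infty$ together with $\prod_{i=0}^n|a_{-i}|\to 0$, not $\to\infty$. In the Hypercyclicity Criterion with finitely supported vectors one needs $\|T^ne_0\|=\prod_{i=0}^{n-1}|a_{-i}|\to0$ and $\|S^ne_0\|=\prod_{j=1}^{n}|a_j|^{-1}\to0$.

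With the hypothesis as stated, (b) is false. Take $a_i\equiv2$, so $T=2B$ with $B$ the unitary bilateral shift and $\|T^nx\|=2^n\|x\|$. Let $U=\{x:\|x-e_0\|<\tfrac12\}$ and $V=\{x:\|x\|<1\}$. Then $B_\delta(V)=\{x:\|x\|<1+\delta\}$, while $\|T^nx\|>2^{n-1}$ for $x\in U$. Hence $T^n(U)\cap B_\delta(V)=\varnothing$ for all large $n$, so $T$ is not $\delta$-TM for any $\delta>0$; it is not even hypercyclic.

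So no argument can prove (b) as written. The paper's one-line proof has the same defect, since Proposition~\ref{prop:delta-mixing-bilateral-safe} rests on the misquoted theorem. Your remark that there is ``no real obstacle'' is where the proposal goes wrong: the cited criterion needed checking, and constant weights expose the error immediately. Once the second hypothesis is corrected to $\prod_{i=0}^n|a_{-i}|\to0$, your argument goes through verbatim.

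Separately, drop the aside treating classical mixing as the ``limiting case'' of $\delta$-TM. With the paper's definition, $B_0(V)=\{x:\operatorname{dist}(x,V)<0\}=\varnothing$, so the monotonicity remark cannot be started at $\delta_1=0$. The direct containment $V\subset B_\delta(V)$ is the right justification.
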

\begin{proof}
Part \textup{(a)} follows from the unilateral case, and part \textup{(b)} from
the bilateral case.
\end{proof}
We conclude this section with the classical example of the weighted shift $\lambda B$.

\begin{proposition}[$\delta$-Hypercyclicity Criterion for $\lambda B$]
\label{prop:lambdaB-delta-HC}
Let $B$ be the backward shift on $\ell^2(\mathbb N_0)$, defined by
\[
B(x_0,x_1,x_2,\dots)=(x_1,x_2,x_3,\dots),
\]
and let $\lambda\in\mathbb C$ with $|\lambda|>1$.
Set $T:=\lambda B$, and let $d$ be the metric induced by the usual
Hilbert norm on $\ell^2(\mathbb N_0)$.

Then for every $\delta>0$, the operator $T$ satisfies the
$\delta$-Hypercyclicity Criterion with respect to the sequence $n_k=k$.
In particular, $T$ is $\delta$-hypercyclic for every $\delta>0$.
\end{proposition}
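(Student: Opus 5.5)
The plan is to verify the classical Hypercyclicity Criterion (Definition~\ref{def:HC-classical}) for $T=\lambda B$ along $n_k=k$. Proposition~\ref{prop:HC-implies-deltaHC} then gives the $\delta$-Hypercyclicity Criterion for every $\delta>0$, and Proposition~\ref{prop:criterion-implies-delta-hypercyclic} gives $\delta$-hypercyclicity. The proof of Proposition~\ref{prop:HC-implies-deltaHC} passes the sequence $(n_k)$ through unchanged, so the conclusion will hold along $n_k=k$ exactly as stated. The space $\ell^2(\mathbb N_0)$ is a separable Banach space, hence a separable F-space, and the norm metric is translation-invariant and compatible with its topology, so both propositions apply.

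\emph{Dense sets.} I will take $V=W=c_{00}$, the space of finitely supported sequences, which is dense in $\ell^2(\mathbb N_0)$. For the right inverses I will use the forward shift $F(x_0,x_1,\dots)=(0,x_0,x_1,\dots)$ and set $S_k:=\lambda^{-k}F^k$ on $W$.

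\emph{Condition (1).} If $v\in c_{00}$ is supported in $\{0,\dots,m\}$, then $B^kv=0$ for every $k>m$. Hence $T^kv=\lambda^kB^kv=0$ eventually, and in particular $T^kv\to0$.

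\emph{Condition (2).} Since $F$ is an isometry, $\|S_kw\|=|\lambda|^{-k}\|w\|$. Because $|\lambda|>1$, this tends to $0$.

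\emph{Condition (3).} Since $BF=I$, we get $T^kS_kw=\lambda^k\lambda^{-k}B^kF^kw=w$ for every $k$. So $d(T^kS_kw,w)=0<\delta$ for all $k$, which is even stronger than the required convergence.

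There is essentially no obstacle here. The only points needing care are, first, that the unilateral identity $BF=I$ holds in the correct order (whereas $FB\neq I$), and second, that $S_k$ is only required to be a map on $W$, which it is. If one prefers to avoid passing through the classical criterion, the same three computations verify Definition~\ref{def:delta-HC-criterion} directly, with $k_0=1$ in condition (3).
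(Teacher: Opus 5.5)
Your proposal is correct and essentially matches the paper's proof. The paper uses the same dense set $V=W$ of finitely supported sequences, the same maps $S_n w=\lambda^{-n}F^n w$, and the same three computations ($T^nv=0$ eventually, $\|S_nw\|=|\lambda|^{-n}\|w\|$, and $T^nS_nw=w$). The only difference is that it verifies the $\delta$-Hypercyclicity Criterion directly, as in your closing alternative, rather than routing through the classical criterion and Proposition~\ref{prop:HC-implies-deltaHC}; it then concludes with Proposition~\ref{prop:criterion-implies-delta-hypercyclic}, as you do.
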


\begin{proof}
Fix $\delta>0$. Let $D\subset \ell^2(\mathbb N_0)$ be the set of finitely
supported sequences. Then $D$ is dense in $\ell^2(\mathbb N_0)$. Set $V=W=D$.

Let $v\in V$. Since $v$ is finitely supported, we have $B^n v=0$ for all
sufficiently large $n$, and therefore $T^n v=\lambda^n B^n v=0$ for all
sufficiently large $n$. Thus condition \textup{(1)} of
Definition~\ref{def:delta-HC-criterion} holds.

Now let $w=(w_0,w_1,w_2,\dots)\in W$, and define
\[
S_n(w):=(\underbrace{0,\dots,0}_{n\text{ terms}},\lambda^{-n}w_0,\lambda^{-n}w_1,\lambda^{-n}w_2,\dots).
\]
Then $\|S_n(w)\|_2=|\lambda|^{-n}\|w\|_2\to 0$ as $n\to\infty$, so condition
\textup{(2)} holds.

Finally, $T^nS_n(w)=(\lambda B)^nS_n(w)=\lambda^nB^nS_n(w)=w$. Hence
$d\bigl(T^nS_n(w),w\bigr)=0<\delta$ for all $n\in\mathbb N$, and condition
\textup{(3)} is satisfied.

Therefore $T$ satisfies the $\delta$-Hypercyclicity Criterion with respect to
the sequence $n_k=k$. The final assertion follows from
Proposition~\ref{prop:criterion-implies-delta-hypercyclic}.
\end{proof}

\section*{Funding}
This research received no external funding.

\section*{Conflict of Interest}
The authors declare that they have no conflict of interest regarding the publication of this paper.

\section*{Data Availability}
No data were generated or analyzed during this study.

\section*{Author Contributions}
All authors contributed equally to the conceptualization, writing, and revision of the manuscript, 
and approved the final version.



\begin{thebibliography}{99}
\bibitem{BayartMatheron2009}
Bayart, F., Matheron, \'E. (2009).
\newblock \textit{Dynamics of Linear Operators}.
\newblock Cambridge University Press.

\bibitem{BenchihebAlshammari2024}
Alshammari, H., Benchiheb, O. (2024).
\newblock Approximate topological transitivity in metric spaces.
\newblock \textit{Preprint}.

\bibitem{BesPeris1999}
B\`es, J., Peris, A. (1999).
\newblock Hereditarily hypercyclic operators.
\newblock \textit{Journal of Functional Analysis}, 167(1), 94--112.

\bibitem{Birkhoff1929}
Birkhoff, G. D. (1929).
\newblock D\'emonstration d'un th\'eor\`eme \'el\'ementaire sur les fonctions enti\`eres.
\newblock \textit{C. R. Acad. Sci. Paris}, 189, 473--475.

\bibitem{Feldman2002}
Feldman, N. S. (2002).
\newblock Perturbations of hypercyclic vectors.
\newblock \textit{Journal of Mathematical Analysis and Applications}, 273(1), 67--74.

\bibitem{GEP2011}
Grosse-Erdmann, K.-G., Peris, A. (2011).
\newblock \textit{Linear Chaos}.
\newblock Universitext, Springer, London.

\bibitem{GethnerShapiro1987}
Gethner, R. M., Shapiro, J. H. (1987).
\newblock Universal vectors for operators on spaces of holomorphic functions.
\newblock \textit{Proceedings of the American Mathematical Society}, 100(2), 281--288.

\bibitem{Kitai1982}
Kitai, C. (1982).
\newblock Invariant closed sets for linear operators.
\newblock Ph.D. Thesis, University of Toronto.

\bibitem{Rolewicz1969}
Rolewicz, S. (1969).
\newblock On orbits of elements.
\newblock \textit{Studia Mathematica}, 32, 17--22.

\bibitem{Salas1995}
Salas, H. N. (1995).
\newblock Hypercyclic weighted shifts.
\newblock \textit{Transactions of the American Mathematical Society}, 347(3), 993--1004.

\end{thebibliography}
\end{document}